\documentclass[11pt]{article}
\usepackage{amssymb}
\usepackage{amsmath}
\usepackage{color}
\usepackage{amsthm}
\usepackage[latin1]{inputenc} 
\usepackage[T1]{fontenc}      
\usepackage{graphicx}
\usepackage{eurosym}
\usepackage{hyperref}
\voffset-2cm
\hoffset-1.5cm
\textheight24cm
\textwidth16cm
\newtheorem{theorem}{Theorem}

\newtheorem{lemma}[theorem]{Lemma}
\newtheorem{proposition}[theorem]{Proposition}
\newtheorem{remark}[theorem]{Remark}
\newtheorem{assumption}[theorem]{Assumption}

\newtheorem{example}[theorem]{Example}

%

\newcommand{\xabd}{x_{\alpha,\beta}^\delta}
\newcommand{\xbe}{x_{\beta,\eta}^\delta}

\newcommand{\xdag}{x^\dagger}
\newcommand{\yd}{y^\delta}
\newcommand{\range}{\rm range}

\newcommand{\R}{\mathbb{R}}
\newcommand{\N}{\mathbb{N}}
\newcommand{\1}{\ell^1}
\newcommand{\2}{\ell^2}
\newcommand{\3}{\ell^\infty}
\newcommand{\M}{\mathcal{M}}

\newcommand\domain[1]{{{\mathcal{D}}(#1)}}

\newcommand{\sgn}{\mathop{\mathrm{sgn}}}


\setlength{\parindent}{1.5em}
\parskip0.2cm

\title{\bf Elastic-net regularization versus $\ell^1$-regularization  \\
for linear inverse problems with quasi-sparse solutions}

\begin{document}

\author{De-Han Chen\thanks{Department of
Mathematics, The Chinese University of Hong Kong, Shatin, N.T., Hong
Kong. ({\tt dhchen@math.cuhk.edu.hk})}
\and Bernd Hofmann\thanks{Technische Universit\"at Chemnitz, Fakult\"at f\"ur Mathematik,  09107 Chemnitz,
Germany.
The research work was initiated during the visit of this author to The Chinese University of
Hong Kong in March 2014 and supported partially by a Direct Grant for Research from The Chinese University of Hong Kong.
The work of this author was substantially supported by German Research Foundation (DFG) under grant HO~1454/8-2.
({\tt bernd.hofmann@mathematik.tu-chemnitz.de})}
\and Jun Zou\thanks{Department of
Mathematics, The Chinese University of Hong Kong, Shatin, N.T., Hong
Kong. The work of this author was substantially supported by Hong Kong RGC grants
(Projects 405513 and 14306814). ({\tt zou@math.cuhk.edu.hk})}}

\date{}

\maketitle

\begin{abstract}
We consider the ill-posed operator equation $Ax=y$ with an injective and bounded linear operator $A$ mapping between $\ell^2$ and a Hilbert space $Y$, possessing the unique solution \linebreak $\xdag=\{\xdag_k\}_{k=1}^\infty$.  For the cases that sparsity  $\xdag \in \ell^0$ is expected but often slightly violated in practice, we investigate in comparison with the $\ell^1$-regularization
the elastic-net regularization, where the penalty is a weighted superposition of the $\ell^1$-norm and the $\ell^2$-norm square, under the assumption that $\xdag \in \1$. There occur two positive parameters in this approach, the weight parameter $\eta$
and the regularization parameter as the multiplier of the whole penalty in the Tikhonov functional, whereas only one regularization parameter arises in $\1$-regularization. Based on the variational inequality
approach for the description of the solution smoothness with respect to the forward operator $A$ and exploiting the
method of approximate source conditions, we
 present some results to estimate the rate of convergence for the elastic-net regularization. The occurring rate function contains the rate of the decay $\xdag_k \to 0$ for $k \to \infty$ and the classical smoothness properties of $\xdag$ as an element in $\ell^2$.
\end{abstract}

\vspace{0.3cm}

{\parindent0em {\bf MSC2010 subject classification:}
65J20,} 47A52, 49J40

\vspace{0.3cm}

{\parindent0em {\bf Keywords:}
Linear ill-posed problems}, sparsity constraints,
elastic-net regularization,\\ $\ell^1$-regularization, convergence rates,
source conditions.

\section{Introduction}\label{s1}
\setcounter{equation}{0}
\setcounter{theorem}{0}
In this paper, we are interested in studying the linear ill-posed problem
\begin{equation}\label{eq:opeq}
Ax=y, \quad x\in \2, ~~y\in Y\,,
\end{equation}
where $Y$ is an infinite dimensional real Hilbert space, and $A:\2\to Y$ an injective and bounded linear operator with a non-closed range. Since $A^{-1}:\range( A)\subset Y\to$ $ \2$ is unbounded in this case, the corresponding system \eqref{eq:opeq}
suffers from ill-posedness in the sense that solutions may not exist if the exact data
$y=A\xdag$ for $\xdag \in \2$ comes with noise, namely
only the noisy data $y^\delta$ of $y$ is available, where $\delta>0$ represents the noisy level in the data, i.e.,
$\|y^\delta-y\|_Y\leq \delta$;
and when solutions exist they may still be far away from the exact solution $x^\dagger$ to (\ref{eq:opeq}), even if $\delta$ is small. In section~\ref{s2}, we will outline that general ill-posed linear operator equations
in Hilbert spaces can be rewritten into the form (\ref{eq:opeq}).

{The most widely adopted approach for regularizing the ill-posed system (\ref{eq:opeq}) is the Tikhonov regularization, which aims at
finding the approximate solutions $x_\gamma^\delta$ to problem (\ref{eq:opeq}) as the minimizers of the variational
problem
\begin{equation}\label{eq:Tikl1}
T_\gamma^\delta(x):=\frac{1}{2}\|Ax-y^\delta\|_Y^2+\gamma \mathcal{R}(x) \to \min,
\end{equation}
where $\gamma>0$ is the regularization parameter, and $\mathcal{R}(x)$ is the penalty that may be
chosen appropriately, with popular examples like
$\|x\|_{\2}^2$, $\|x\|_{TV}$, $\|x\|_{H}$ or $\|x\|^q_{\ell^q}:=\sum \limits_{k=1}^\infty |x_k|^q$
for $1\leq q<+\infty$. In particular,
it was shown in \cite{GrasHaltSch08} and \cite{Lorenz08} that the penalty functional $\|x\|_{\1}$ ensures that the $\1$-regularized solutions $x_\gamma^\delta$ to the variational problem
\begin{equation}\label{eq:Tikl10}
T_\gamma^\delta(x):=\frac{1}{2}\|Ax-y^\delta\|_Y^2+\gamma \|x\|_{\1} \to \min,\quad \mbox{subject to}\quad x \in \1,
\end{equation}
provide} stable approximate solutions to equation (\ref{eq:opeq}) if the exact solution
$\xdag=\{\xdag_k\}_{k=1}^\infty$ is sparse, i.e., $x_k \not=0$ occurs only for a finite number of components. The sparsity has been recognized as an important structure in many fields, e.g. geophysics~\cite{Taylor}, imaging science~\cite{Figueiredo07}, statistics~\cite{Tibshirani96} and signal processing~\cite{Candes}, and hence has received considerable attention. In this work, motivated by the recent works
on the multi-parameter Tikhonov functional~\cite{JZ12,JLS09,WLMC13}, we consider the following multi-parameter variational problem
\begin{equation} \label{eq:Tikelastic}
T_{\alpha,\beta}^\delta(x):= \frac{1}{2}\|Ax-y^\delta\|^2_Y +\alpha\,\|x\|_{\1}+ \frac{\beta}{2}\|x\|^2_{\2}\to \min, \quad \mbox{subject to}\quad x \in \1,
\end{equation}
which is called \emph{the elastic-net regularization}. The functional $T_{\alpha,\beta}^\delta$ was originally used in statistics~\cite{ZH05}. The major motivation is the observation that the $\1$-regularization fails to identify group structure for problems with highly correlated features, and tends to select only one feature out of a relevant group. It was proposed and confirmed numerically in \cite{ZH05} that the elastic-net regularization may retrieve the whole relevant group correctly. For an application of the elastic-net regularization to learning theory, one may refer to \cite{DeMol09}. Furthermore, the stability of the minimizer and its consistency have been studied, and convergence rates for both a priori and a posteriori parameter choice have been established under suitable source conditions (cf.~\cite{JLS09}). {Moreover, we would also like to emphasize that
the elastic-net regularization can be viewed as a special case of the $\1$-regularization,
due to the identity
$$\frac{1}{2}\|Ax-y^\delta\|_Y^2+\alpha\,\|x\|_{\1}+ \frac{\beta}{2}\|x\|^2_{\2}= \frac{1}{2}\left\|\left[\begin{array}{c}A\\\sqrt{\beta} I\end{array} \right]x-\left[\begin{array}{c} y^\delta\\0\end{array} \right]\right\|_{Y \times \2}^2+\alpha\,\|x\|_{\1}.$$
}

As it was done for the {\sl $\1$-regularization} (\ref{eq:Tikl10}) in \cite{BFH13}, we intend to enrich with the present work the analysis on elastic-net regularization by taking into account the case that the solution $\xdag$ is not truly sparse in many applications,
but has infinitely many
nonzero components $\xdag_k$ that decay sufficiently rapidly to zero as $k \to \infty$. We shall call this kind of solutions
to be {\it quasi-sparse} in the sequel for convenience, which occur often in practice,
e.g., {when applying wavelets to audio signals or natural images (cf.~\cite{HHO97,WCP92}), where
the compression algorithms are usually constructed by making use of the fact that most coefficients are very small and can be ignored.}
We shall model the quasi-sparse solutions with the assumption $\xdag \in \1$.
Following \cite{JLS09}, we consider for elastic-net regularization the pair $(\beta,\eta)$ of positive regularization parameters instead of the pair $(\alpha,\beta)$ by setting $\eta:=\alpha/\beta$, then
(\ref{eq:Tikelastic}) is reformulated as
 \begin{equation} \label{eq:Tiketa}
T_{\beta,\eta}^\delta(x):=\frac{1}{2}\|Ax-y^\delta\|^2_Y +\beta\,\mathcal{R}_\eta(x) \to \min, \quad \mbox{subject to}\quad x \in X,
\end{equation}
with the penalty functional
\begin{equation} \label{eq:peneta}
\mathcal{R}_\eta(x):=\eta\,\|x\|_{\1}+\frac{1}{2}\|x\|^2_{\2},
\end{equation}
and we denote by $\xbe$ the minimizers to (\ref{eq:Tiketa}).
The degenerate form of $\ell^1$-regularization, i.e., $\beta=0$ in (\ref{eq:Tikelastic}),  was studied intensively including convergence rates in \cite{BFH13} (see also the extensions in \cite{FHV16} and references therein) for the cases with quasi-sparse solutions.
We will extend the results from
\cite{BFH13},  \cite{Hof06} and \cite{JLS09} to analyze the two-parameter situation of elastic-net regularization with respect to convergence rates when the sparsity assumption fails. {It is worth mentioning  that other modifications of (\ref{eq:Tikl10}) have already been discussed in literature, for instance, the term  $\gamma\,\|x\|_{\1}$ in the penalty functional may be
replaced by some weighted or modified versions (cf., e.g.,~\cite{Lorenz08,RamRes10}), or
alternatively by non-convex sparsity-promoting terms like $\gamma\,\|x\|_{\ell^q}$ for $0<q<1$  (cf., e.g.,~\cite{BreLor09,Zarzer09}) or
$\gamma\,\|x\|_{\ell^0}:=\gamma\,\sum\limits_{k=1}^\infty \sgn(|x_k|)$ (cf.~\cite{WLMC13}).}
However, the theory with respect to convergence rates for the cases with quasi-sparse solutions
are still rather limited compared with the case of truly sparse solutions.

The paper is organized as follows. In section 2 we will fix the basic problem setup, notations and assumptions, and then proceed to
 an overview of the smoothness conditions for proving convergence rates of single or multi-parameter Tikhonov regularization,
 where we shall show that the source conditions  do not hold when the sparsity is violated.  In section \ref{s3} we derive the  convergence rates
 of regularized solutions for general linear ill-posed problems under variational inequalities, in which the
 regularization parameter is chosen according to three varieties of a posteriori parameter choices, i.e., two-sided discrepancy
 principle, sequential discrepancy principle and adapted  Lepski{\u \i} principle.
 These results are then applied to
 $\1$-regularization  (\ref{eq:Tikl1}) directly in section \ref{s4}.  In section \ref{s5},
 by deriving an appropriate variational inequality, we establish the convergence rates for the regularized solutions $\xbe$ of elastic-net regularization  (\ref{eq:Tikelastic}) for
 a fixed $\eta>0$ and the case with quasi-sparse solutions.

\section{Problem setting and basic assumptions}\label{s2}
\setcounter{equation}{0}
\setcounter{theorem}{0}
Let $\widetilde X$ (resp.~$Y$) be  an infinite dimensional real Hilbert space, endowed with an inner product $\langle \cdot,\cdot\rangle_{\widetilde X}$ (resp.~$\langle \cdot,\cdot\rangle_{Y}$) and a norm $\|\cdot\|_{\widetilde X}$ (resp.~$\|\cdot\|_Y$),
 $\widetilde X$ be  separable, and  $\widetilde A \in \mathcal{L}(\widetilde X,Y)$ an injective and bounded linear operator mapping between $\widetilde X$
and $Y$.
In addition, we assume that $\range(\widetilde A)$ of $\widetilde A$ is not closed,
which is equivalent to that the inverse
${\widetilde A}^{-1}: \range(\widetilde A) \subset Y \to \widetilde X$ is unbounded. Thus
the operator equation
\begin{equation} \label{eq:tildeeq}
\widetilde A\,\widetilde x\,=\,y, \qquad \widetilde x \in \widetilde X,\quad y \in Y,
\end{equation}
with uniquely determined solution $\widetilde \xdag \in \widetilde X$ is ill-posed. This means that for noisy data $\yd \in Y$ replacing $y \in \range(\widetilde A)$  in (\ref{eq:tildeeq}), solutions may not exist, and even
when they exist the solutions may be still far from $\widetilde \xdag$
under the deterministic noise model
\begin{equation}\label{eq:noise}
\|y-y^\delta\|_Y \le \delta,
\end{equation}
with small noise level $\delta>0$.


{With the setting $A:=\widetilde A \circ U$, where $U:\2 \to \widetilde X$ is the unitary synthesis operator characterizing the isometric isomorphy between the separable Hilbert spaces $\widetilde X$ and $\2$, the operator equation (\ref{eq:tildeeq})
can be rewritten in the form
\begin{equation}\label{eq:opeq0}
 A\,x\,=\,y, \qquad x \in \2,\quad y \in Y.
\end{equation}
This transforms \eqref{eq:tildeeq} to the desired structure (\ref{eq:opeq}).
We note that $A$ is also injective, so this linear operator equation is ill-posed,
i.e.,~$\range(A)$ $ \not= \overline {\range(A)}^{\,Y}$.}

For any sequence $x=\{x_k\}_{k=1}^\infty$, we will denote by $\|x\|_{\ell^q}:=\left( \sum \limits_{k=1}^\infty |x_k|^q\right)^{1/q}$ the norm in the Banach spaces $\ell^q$ for $1 \le q<\infty$, and by $\|x\|_{\ell^\infty}:=\sup \limits_{k \in \N} |x_k|$  the norm in  $\ell^\infty$. The same norm  $\|x\|_{c_0}:=\sup \limits_{k \in \N} |x_k|$
is used for the Banach space $c_0$ of infinite sequences tending to zero. On the other hand, the symbol $\ell^0$ will stand for the set of all sparse sequences $x$, where $x_k \not=0$ occurs
only for a finite number of components. In the sequel we also set for short
$$X:=\1\,, $$
and consequently $X^*$ for the dual space $\3$ of $X$.

In the sequel, let $\langle \cdot,\cdot\rangle_{B^*\times B}$ denote the dual pairing between a Banach space $B$ and
its dual space $B^*$, and  $v_n \rightharpoonup v_0$  stand for the weak convergence in $B$, i.e.,
$\lim \limits_{n \to \infty}\langle w,v_n\rangle_{B^*\times B}=\langle w,v_0\rangle_{B^*\times B}$   for all $w \in B^*$.
For a Hilbert space $B$ we identify $B$ and $B^*$ such that weak convergence takes the form $\lim \limits_{n \to \infty}\langle w,v_n\rangle_{B}=\langle w,v_0\rangle_{B}$   for all $w \in B$.
Furthermore, we denote by  $e^{(k)}$, with $1$ at the $k$th position for $k=1,2,...$,  the elements of the standard orthonormal basis in $\ell^2$, which also is the normalized canonical Schauder basis in  $c_0$ and $\ell^q$ ($1 \le q<\infty$). That is, we find $\lim \limits_{n \to \infty}\|x-\sum \limits_{k=1}^n x_k e^{(k)}\|_{c_0}=0$ for all $x \in c_0$ and $\lim \limits_{n \to \infty}\|x-\sum \limits_{k=1}^n x_k e^{(k)}\|_{\ell^q}=0$ for all $x \in  \ell^q$, $1 \le q<\infty$.
For the operator $A: \2 \to Y$ we can consider its adjoint operator $A^*: Y \to \2$ by the condition
$$\langle v,Ax \rangle_Y=\langle A^*v,x \rangle_{\2}   \qquad \mbox{for all} \quad x \in \2,\;v \in Y.$$

Now we are stating a set of assumptions for the further consideration of equation (\ref{eq:opeq})
with a uniquely determined solution $\xdag$ and of the regularized solutions $\xabd$ and $x_\gamma^\delta$ solving the
extremal problems (\ref{eq:Tikl10}) and (\ref{eq:Tikelastic}), respectively.

\begin{assumption} \label{ass:basic}
\begin{itemize} \item[]
\item[(a)] The operator $A$ in equation (\ref{eq:opeq}) is an injective and bounded linear operator mapping $\ell^2$ to the Hilbert space $Y$ with a non-closed range. i.e., $\range(A) \not= \overline {\range(A)}^{\,Y}$.

\item[(b)]  Element $\xdag\in \1$ solves equation  (\ref{eq:opeq}).

\item[(c)] {For each $k\in \N$, there exists $f^{(k)} \in Y$ such that
$e^{(k)}= A^*f^{(k)}$, i.e., it holds that \linebreak
 $x_k=\langle e^{(k)},x\rangle_{\2}=\langle f^{(k)},Ax\rangle_{Y}\,$ for all $x=\{x_k\}_{k=1}^\infty \in \2$.}
\end{itemize}
\end{assumption}

\begin{remark} \label{rem:ass} 

{{\rm Item (c) above seems to be only a technical condition. If one considers the general operator equation
(\ref{eq:tildeeq}),  then it is equivalent to that
$u^{(k)}=\widetilde A^* f^{(k)}, \;k \in \N,$ for all elements of the orthonormal basis $\{u^{(k)}\}_{k=1}^\infty$
in $\widetilde X$ characterizing the unitary operator $U$.
However, this condition was motivated for a wide class of linear inverse problems by using the Gelfand triple
\cite{AHR13}.
This series of independent source conditions for all $e^{(k)}$ is nothing but a requirement on the choice of the
basis elements $u^{(k)}$ in $\widetilde X$. Roughly speaking, the basis elements must be in some sense `smooth enough' under the auspices of the operator
$\widetilde A$.
}} \hfill\fbox{}
\end{remark}

\begin{proposition} \label{pro:c0}
The range  of $A^*: Y \to \2$ is a nonclosed subset of $\2$ but dense in the sense of
the $\2$-norm, i.e.,~$\overline {\range(A^*)}^{\,\2}=\2$. On the other hand, $\range(A^*)$ is always a subset of $c_0$ and hence not dense in $\3$ in the sense of the supremum norm, i.e.,~$\overline {\range(A^*)}^{\,\3}\not=\3$.
\end{proposition}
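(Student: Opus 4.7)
The plan is to address the two assertions separately, each by standard functional-analytic tools.

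For the claim about the $\2$-norm closure of $\range(A^*)$: I would first invoke the orthogonal complement identity $\overline{\range(A^*)}^{\,\2}=\nullspace(A)^\perp$, valid for any bounded linear operator between Hilbert spaces. Since $A$ is injective by Assumption~\ref{ass:basic}(a), the kernel $\nullspace(A)=\{0\}$ and hence $\overline{\range(A^*)}^{\,\2}=\2$, giving density. For non-closedness I would appeal to the Banach closed range theorem, which asserts that $\range(A)$ is closed in $Y$ if and only if $\range(A^*)$ is closed in $\2$. Since $\range(A)$ fails to be closed by Assumption~\ref{ass:basic}(a), $\range(A^*)$ is not closed either.

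For the claim that $\range(A^*) \subset c_0$: The key observation is that the standard orthonormal basis $\{e^{(k)}\}$ converges weakly to zero in $\2$, because $\langle w,e^{(k)}\rangle_{\2}=w_k\to 0$ for every $w\in\2$. Since $A\in\mathcal{L}(\2,Y)$ is bounded and therefore weak-to-weak continuous, $Ae^{(k)}\rightharpoonup 0$ in $Y$. For any $v\in Y$, the $k$th component of $A^*v$ is
$$(A^*v)_k \,=\, \langle A^*v,e^{(k)}\rangle_{\2} \,=\, \langle v,Ae^{(k)}\rangle_Y \,\to\, 0,$$
so $A^*v \in c_0$. As $c_0$ is a proper closed subspace of $\3$ with respect to the supremum norm (e.g.~any constant nonzero sequence lies outside $c_0$), we conclude $\overline{\range(A^*)}^{\,\3}\subseteq c_0 \subsetneq \3$.

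The main obstacle here is essentially cosmetic: every ingredient — the orthogonal complement formula, the closed range theorem, weak-to-weak continuity of bounded operators, and closedness of $c_0$ in $\3$ — is textbook material, and no use is actually made of Assumption~\ref{ass:basic}(c). The only point requiring care is to keep the two topologies clearly separated, so as to highlight the relevant contrast: density of $\range(A^*)$ holds only in the $\2$-norm, while the automatic containment in $c_0$ rules out density in the stronger supremum norm. This dichotomy is presumably why the proposition is recorded at this point, since it foreshadows that source conditions formulated in $\3$ are strictly more restrictive than those in $\2$, a fact that will matter in the rate analysis of subsequent sections.
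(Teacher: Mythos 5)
Your proposal is correct and follows essentially the same route as the paper: the orthogonal-complement identity $\overline{\range(A^*)}^{\,\2}=\nullspace(A)^\perp$ together with injectivity of $A$ gives density, and the closed range theorem gives non-closedness of $\range(A^*)$ from the non-closedness of $\range(A)$. The only (cosmetic) difference is the inclusion $\range(A^*)\subset c_0$: the paper obtains it immediately from $\range(A^*)\subset\2\subset c_0$, whereas you re-derive it via weak convergence of $e^{(k)}$ and weak-to-weak continuity of $A$ --- correct, but more machinery than is needed.
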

\begin{proof}
{The proof is based on the properties of $A$ from Assumption~\ref{ass:basic}(a).
Noting that $\2 \subset c_0$, we know
$\range(A^*)\subset c_0$ for the adjoint operator $A^*: Y \to \2$.
On the other hand, the condition $\overline {\range(A^*)}^{\,\2}=\2$ is a consequence of the injectivity of $A$, while the non-closedness of $\range(A^*)$ in $Y$ follows  from the closed range theorem (cf.~\cite{Yos95}).
}\end{proof}

The smoothness of the solution to the ill-posed operator equation (\ref{eq:opeq})
with respect to the
forward operator $A$ plays an important role for obtaining error estimates and convergence rates in
Tikhonov-type regularization, e.g., see \cite{Scherzetal09,SKHK12}.
Such smoothness can be expressed by source conditions. {In particular, for the most prominent form
of the $\ell^2$-regularization
\begin{equation} \label{eq:Tikquad}
 \frac{1}{2}\|Ax-y^\delta\|^2_Y+\gamma\,\|x\|^2_{\2} \to \min, \quad \mbox{subject to}\quad x \in \2\,,
 \end{equation}
the classical theory of the Tikhonov regularization in Hilbert spaces applies (cf.~\cite{EHN96,Groe84}).
By making use of the purely quadratic penalty,
the minimizers $x^\delta_\gamma$ achieve the convergence rate
\begin{equation} \label{eq:Hilbertrate}
\|x_\gamma^\delta-\xdag\|_{\2}=\mathcal{O}(\sqrt{\delta}) \quad \mbox{as} \quad \delta \to 0
\end{equation}
under the source condition that
\begin{equation} \label{eq:Hilbertsc}
\xdag= A^*v, \quad v \in Y,
\end{equation}
when the regularization parameter is chosen a priori as $\gamma=\gamma(\delta) \sim \delta$ or a posteriori as $\gamma=\gamma(\delta,y^\delta)$
based on the discrepancy principle $\|Ax_\gamma^\delta-y ^\delta\|_Y=\tau\,\delta$ for some prescribed $\tau \ge 1$.}

If $\xdag$ is not smooth enough to satisfy (\ref{eq:Hilbertsc}), then the method of {\it approximate source conditions}  may help to bridge this gap when the concave and nonincreasing distance function
\begin{equation} \label{eq:distfct}
d^{A^*}_{\xdag}(R):=\inf \limits _{v \in Y: \,\|v\|_{Y} \le R}\|\xdag-A^*v\|_{\2}, \qquad R>0,
\end{equation}
tends to zero as $R\to \infty$. In such  case the decay rate of $d^{A^*}_{\xdag}(R) \to 0$
as $R \to \infty$ characterizes the degree of violation with respect to (\ref{eq:Hilbertsc}). Then
the convergence rate is slower than (\ref{eq:Hilbertrate}) and the rate function depends on $d^{A^*}_{\xdag}$ (cf.~\cite{Hof06,HofMat07}, and also \cite{BakuKok04}). From Proposition~\ref{pro:c0} we have
for all elements $\xdag \notin \2 \setminus \range(A^*)$ that $\lim \limits_{R \to \infty} d^{A^*}_{\xdag}(R)=0$,
because $\overline {\range(A^*)}^{\,\2}=\2$.

{If the element $\xdag \in \2$ fails to satisfy the source condition (\ref{eq:Hilbertsc}), however, we know that there is an {\it index function} $g$ and a source element $w \in \2$ such that (cf.~\cite{MatHof08})
\begin{equation} \label{eq:how}
\xdag= g(A^*A)\, w
\end{equation}
Here we say that
$g: (0,\infty) \to (0,\infty)$ is an index function if it is continuous, strictly increasing and
satisfies the  condition $\lim \limits _{t \to 0^+} g(t)=0$.
For $g(t)=\sqrt{t}$, the source conditions (\ref{eq:Hilbertsc}) and (\ref{eq:how}) are equivalent.
For any $\xdag\in \ell^2$,  the connection between $g$ in (\ref{eq:how}) and the distance functions in (\ref{eq:distfct}) were
outlined in \cite[\S~5.3]{HofMat07}.} In particular, we can find from \cite[Theorem 3.1]{DHY07} that,
for any exponents $0<\theta<1$,
\begin{equation} \label{eq:powerdist}
d^{A^*}_{\xdag}(R) \le \frac{K}{R^\frac{\theta}{1-\theta} }\quad \forall \, R\geq \underline{R} \quad \mbox{if} \quad \xdag \in \range[(A^*A)^{\theta/2}],
\end{equation}
with some positive constants $K$ and $\underline{R}$. In this case we say that $\xdag$ satisfies a H\"{o}lder source condition with exponent $\theta$.

For a general convex but not purely quadratic penalty functional $\mathcal{R}$,  the benchmark source condition is given by
\begin{equation} \label{eq:Banachsc}
\xi^\dagger=A^*v, \quad v\in Y\,,
\end{equation}
for some subgradient $\xi^\dagger \in \partial\mathcal{R}(\xdag)$ and source element $v\in Y$ (cf.~\cite{BurOsh04}).
As a result of the following proposition,  the source condition (\ref{eq:Banachsc}) for $\mathcal{R}=\mathcal{R}_\eta$ (see (\ref{eq:peneta})) can only hold if the solution is sparse, i.e.,~$\xdag \in \ell^0$. Hence this condition, which is also important for the convergence rates of elastic-net regularization in \cite{JLS09}, completely fails for the quasi-sparsity case of our interest
in this work.
The approximate source condition approach was extended to the general Banach space situation with convex penalties $\mathcal{R}$ in \cite{HeinHof09}. As an analog to (\ref{eq:distfct}), the corresponding distance functions for $\mathcal{R}=\mathcal{R}_\eta$ attain the form
\begin{equation} \label{eq:distxi}
d_{\xi^\dagger}(R):=\inf \limits _{v \in Y: \,\|v\|_{Y} \le R}\|\xi^\dagger-A^*v\|_{\3}, \qquad R>0.
\end{equation}
However, the proposition below also implies that this approximate source condition also fails, i.e.,
 $d_{\xi^\dagger}(R) \to 0$ as $R \to \infty$  cannot hold if $\xdag \in \1 \setminus \ell^0$,
which makes it impossible to verify convergence rates based on this approach.

On the other hand, explicit convergence rates of regularized solutions $\xabd$
for elastic-net Tikhonov regularization  (\ref{eq:Tikelastic}) may require
smoothness properties of $\xdag$ (cf.~\cite{ITO11}).  More precisely,
if for any $t\in[0,1]$ there exists $w_t$ such that
\begin{equation}\label{eq:multisource}
\xi_t:=A^* w_t\in
\partial \psi_{t}(\xdag),
\end{equation}
where
$ \psi_{t}(x):=t\|x\|_{\1}+(1-t)\|x\|^2_{\2}$, then the convergence rate
$$
D_{\xi_{t}}(x_{\alpha^{*}(\delta),\beta^{*}(\delta)}^\delta,\xdag)=\mathcal{O}(\delta)
$$
can be established, for Bregman distance $D_{\xi_{t}}(x,\xdag):=\psi_{t}(x)-\psi_{t}(\xdag)-\langle \xi_{t},x-\xdag\rangle_{\2}$, and parameter choice
$(\alpha^{*}(\delta),\beta^{*}(\delta))$ based on the multi-parameter discrepancy principle, i.e., $(\alpha^{*}(\delta),\beta^{*}(\delta))$ satisfies
$$
\|x_{\alpha^{*}(\delta),\beta^{*}(\delta)}-\xdag\|_{l^2}=c_m \delta^2
$$ with some prescribed  constant $c_m\geq 1$.
The proposition below also implies that the condition (\ref{eq:multisource})
fails if $\xdag$ is not truly sparse, because $\psi_t(x)=2(1-t)\mathcal{R}_{t/(2-2t)}(x)$ for all $t\in [0,1)$.

For each $\eta>0$ the convex functional $\mathcal{R}_\eta$ defined in (\ref{eq:peneta}) attains finite values on $X$. Moreover for each $x \in X$, by using the subgradients $\zeta=\{\zeta_k\}_{k=1}^\infty \subset X^*$ of $\|x\|_{\1}$, the subdifferential $\partial\mathcal{R}_\eta(x)$
collects all subgradients $\xi=\{\xi_k\}_{k=1}^\infty \subset X^*$ of the form
\begin{equation}\label{eq:subgr}
 \xi_k \, = \eta\,\zeta_k+x_k,\quad \mbox{where}\quad \zeta_k  \begin{cases} \quad =1 \quad \; \qquad \quad \mbox{if} \qquad x_k>0,\\ \in [-1,1] \qquad\quad \,\mbox{if}  \qquad x_k=0, \\\;\;= -1 \qquad \quad \quad \mbox{if} \qquad x_k<0, \end{cases} \qquad k \in \N.
 \end{equation}

\begin{proposition} \label{pro:xi}
If $\xdag \in \1 \setminus \ell^0$ and $v \in Y$, then for any $\eta>0$ the condition $A^*v \in \partial \mathcal{R}_\eta(\xdag)$ cannot hold. Also it does not hold that $d_{\xi^\dagger}(R) \to 0$ as $R \to \infty$ (see (\ref{eq:distxi})) in this case.
\end{proposition}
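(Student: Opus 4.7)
The proof plan hinges on combining the explicit subgradient formula (\ref{eq:subgr}) with the inclusion $\range(A^*) \subset c_0$ established in Proposition~\ref{pro:c0}. The crucial observation is that the $\ell^1$-part of $\mathcal{R}_\eta$ forces infinitely many components of any subgradient $\xi^\dagger$ to remain at magnitude at least $\eta$, which is incompatible with lying in $c_0$.

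First, I would fix an arbitrary subgradient $\xi^\dagger \in \partial \mathcal{R}_\eta(\xdag)$ and use that $\xdag \in \1 \setminus \ell^0$ implies that the index set $K := \{k \in \N : \xdag_k \neq 0\}$ is infinite. For every $k \in K$, formula (\ref{eq:subgr}) forces $\zeta_k = \sgn(\xdag_k)$, and since $\xdag_k$ and $\sgn(\xdag_k)$ share the same sign,
$$
|\xi^\dagger_k| = |\eta\,\sgn(\xdag_k)+\xdag_k| = \eta+|\xdag_k| \ge \eta.
$$
Because $\xdag \in \1$ entails $\xdag_k \to 0$, one even has $|\xi^\dagger_k| \to \eta$ as $k \to \infty$ along $K$. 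In particular, $\xi^\dagger \notin c_0$, regardless of how the free values $\zeta_k \in [-1,1]$ on $\N \setminus K$ are chosen.

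For the first assertion, I would argue by contradiction: if $A^*v = \xi^\dagger$ for some $v \in Y$, Proposition~\ref{pro:c0} would force $\xi^\dagger \in c_0$, contradicting the bound $|\xi^\dagger_k| \ge \eta$ along the infinite subsequence $K$. Hence $A^*v \in \partial \mathcal{R}_\eta(\xdag)$ is impossible for any $v \in Y$ and any admissible subgradient. For the second assertion, I would produce a lower bound for $\|\xi^\dagger-A^*v\|_{\3}$ that is uniform in $v$. Given any $v \in Y$, Proposition~\ref{pro:c0} yields $A^*v \in c_0$, so $(A^*v)_k \to 0$, and combined with $|\xi^\dagger_k| \to \eta$ along $K$ one obtains
$$
\|\xi^\dagger - A^*v\|_{\3} \;\ge\; \limsup_{k \in K,\; k \to \infty} |\xi^\dagger_k - (A^*v)_k| \;=\; \eta.
$$
Since this bound is independent of $v$, it follows that $d_{\xi^\dagger}(R) \ge \eta$ for every $R>0$, so $d_{\xi^\dagger}(R) \not\to 0$ as $R \to \infty$.

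No serious obstacle is anticipated; the argument is structurally short. The only place where one has to be careful is the uniformity claim in the second part: one must note that \emph{every} choice of subgradient (i.e., every admissible extension of $\zeta$ to $\N \setminus K$) produces the same lower bound, because the bound is derived solely from the components indexed by $K$, where (\ref{eq:subgr}) leaves no freedom.
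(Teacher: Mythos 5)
Your proposal is correct and follows essentially the same route as the paper's proof: both exploit that on the (infinite) support of $\xdag$ the subgradient formula (\ref{eq:subgr}) pins $\zeta_k=\sgn(\xdag_k)$, so the corresponding components of any $\xi^\dagger\in\partial\mathcal{R}_\eta(\xdag)$ stay at magnitude at least $\eta$ while $\range(A^*)\subset c_0$ (Proposition~\ref{pro:c0}), which yields the contradiction for the first claim and the uniform bound $\|\xi^\dagger-A^*v\|_{\3}\ge\eta$ for the second.
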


\begin{proof}
Assume that $A^*v \in \partial \mathcal{R}_\eta(\xdag)$ holds for some $\xdag \in \1 \setminus \ell^0$ and $v\in Y$.  Then, by formula (\ref{eq:subgr}) we have for every $\xi^\dagger:=A^*v \in \partial \mathcal{R}_\eta(\xdag)$ that
$\xi^\dagger_{k}=\pm \eta\zeta_k+\xdag_{k}$ for all $k \in \N$.
Then using the signum function
$$\sgn(z):=\begin{cases}\, \;\;1  \;\; \mbox{if} \;\; z>0,\\\;\; \,0 \; \;\mbox{if} \;\;z=0, \\-1 \; \;\mbox{if}\;\;z<0, \end{cases}
$$ there is a subsequence $\{\xdag_{k_l}\}_{l=1}^\infty$ of $\{\xdag_k\}^\infty_{k=1}$ such that
$|\sgn(\xdag_{k_l})|=1$ for all $l\in \N$ and $\lim \limits_{l \to \infty}|\xdag_{k_l}|=0$.
Therefore  we have
$$\left|\frac{[A^*v]_{k_l}- \xdag_{k_l}}{\eta}\right|=1,$$
which  gives a contradiction, because Proposition~\ref{pro:c0} implies  $A^*v \in c_0$, and hence that the left-hand side of this equation tends to zero for $l \to \infty$.  The second assertion is
a simple consequence of the fact that $\|\xi^\dagger-w\|_{\3}\ge \eta$ holds  for all elements $w \in c_0$ in this case.
\end{proof}

From the reasoning above we know that the source conditions always fail for the quasi-sparse solutions. To overcome the difficulty, we shall use the variational inequalities (variational source conditions) instead.
For more discussions about smoothness of solutions and their
 influences on convergence rates, we refer to \cite{HKPS07,HofMat12} and \cite{Flemmingbuch12,Fle12,Grasm10}
for further details.

\section{Convergence rates under variational inequalities}\label{s3}
\setcounter{equation}{0}
\setcounter{theorem}{0}
Throughout this section we extend our consideration to
 the more general situation of  an ill-posed operator equation
\begin{equation} \label{eq:genopeq}
A\,x\,=\,y, \qquad x \in Z,\quad y \in Y,
\end{equation}
and regularized
solutions $x_\gamma^\delta$ to (\ref{eq:genopeq}) with regularization parameter $\gamma>0$, which are minimizers of the functional
\begin{equation} \label{eq:genTik}
T_{\gamma}^\delta(x):=\frac{1}{2}\|Ax-y^\delta\|^2_Y+\gamma\,\mathcal{R}(x) \to \min, \quad \mbox{subject to}\quad x \in Z,
\end{equation}
where the nonnegative penalty functional $\mathcal{R}$ is  convex, lower semi-continuous and stabilizing,  and  $Z$
is a Hilbert space.  Here we call $\mathcal{R}$ stabilizing if the sublevel sets
$\M_c:=\{x \in Z:\,\mathcal{R}(x) \le c\}$ are weakly sequentially compact subsets of $Z$ for all $c \ge 0$.
In this context, the linear forward operator $A: Z \to Y$ is assumed to be injective and bounded,  and the uniquely determined solution $\xdag$ of (\ref{eq:genopeq})
is required to satisfy the condition that  $\xdag \in \domain{\mathcal{R}}$ with
$$\domain{\mathcal{R}}:=\{x \in Z:\,\mathcal{R}(x)<\infty\}.$$
Under the aforementioned assumptions  on $\mathcal{R}$, we know that  regularized solutions $x_\gamma^\delta$ exist
for all $\gamma>0$ and $\yd \in Y$,
and are stable with respect to perturbations in the data $\yd$ (cf., e.g., \cite{HKPS07,Scherzetal09,SKHK12}).

{For an index function $g$ and an error measure $E: \domain{\mathcal{R}} \times \domain{\mathcal{R}} \to \R_+$,
we need an appropriate choice $\gamma_*=\gamma_*(\delta,\yd)$ of the regularization parameter and
certain smoothness of $\xdag$ with respect to the forward operator $A$ to obtain
a convergence rate of the form
\begin{equation} \label{eq:concaverate}
E(x^\delta_{\gamma_*},\xdag)=\mathcal{O}(g(\delta)) \quad \mbox{as} \quad \delta \to 0\,.
\end{equation}
In particular, the variational inequalities of the form
\begin{equation}\label{eq:vi}
\lambda\,E(x,\xdag) \le \mathcal{R}(x)-\mathcal{R}(\xdag)+ C\,g\left(\|A(x-\xdag)\|_Y\right)  \quad \mbox{for all} \quad x \in \domain{\mathcal{R}}
\end{equation}
become popular for the description of solution smoothness, which were developed independently
in \cite{Flemmingbuch12} and \cite{Grasm10}, where $\lambda$ and $C$ are
constants satisfying $0<\lambda \le 1$ and $C>0$, and the index function $g$ was assumed to be concave.}

%

Concerning appropriate selection strategies for regularization parameters, we list  the following three
principles for a posteriori parameter choice $\gamma_*=\gamma(\delta,\yd)$,
for which Proposition~\ref{pro:general} below will apply and yield
corresponding reasonable convergence rates.

\begin{itemize}
\item[TDP:] For the prescribed $\tau_1$ and $\tau_2$ satisfying $1 \le \tau_1 \le \tau_2<\infty$,
the two-sided discrepancy principle (TDP) suggests to choose
the regularization parameter $\gamma_*=\gamma_{\scriptscriptstyle TDP}$ such that
\begin{equation}\label{eq:tdp}
\tau_1\,\delta \le \|Ax^\delta_{\gamma_{\scriptscriptstyle TDP}}-\yd\|_Y \le \tau_2\,\delta.
\end{equation}
Since the discrepancy functional $\|Ax^\delta_\gamma-\yd\|_Y$ is continuous and increasing with respect to
$\gamma \in (0,\infty)$, the regularization parameter $\gamma_{\scriptscriptstyle TDP}>0$ exists
for all $\yd \in Y$ whenever $\delta>0$ is sufficiently small. We refer to, e.g., \cite{AR10} for more details.
\item[SDP:] For the prescribed $\tau$, $q$ and $\gamma_0$
satisfying  $\tau >1,$  $0 < q < 1$ and $\gamma_0>0$, and the decreasing geometric sequence
$$ \Delta_q := \{ \gamma_j : \;\gamma_j=q^j \gamma_0, \; j \in \mathbb{N}\}, $$
the sequential discrepancy principle (SDP) suggests to choose
the regularization parameter $\gamma_*=\gamma_{\scriptscriptstyle SDP}$ such that
$\gamma_{\scriptscriptstyle SDP} \in \Delta_q$ satisfies
\begin{equation}\label{eq:sdp}
    \|A x_{\gamma_{\scriptscriptstyle SDP}}^\delta - \yd\|_Y \leq \tau \delta < \|A x_{\gamma_{\scriptscriptstyle SDP}/q}^{\delta} - \yd\|_Y.
\end{equation}
When using the SDP, we are
interested in finding the largest value $\gamma$ from the sequence $\Delta_q$ such that
$\|A x^\delta_\gamma-\yd\|_Y \le \tau \delta$. For the well-definedness of $\gamma_{\scriptscriptstyle SDP}$ from SDP,
its properties and convergence of regularized solutions $x^\delta_{\gamma_{\scriptscriptstyle SDP}}$ as $\delta \to 0$, we refer to \cite{AnzHofMat13}.
In principle, one can say that $\gamma_{\scriptscriptstyle SDP}$
is uniquely determined for all $0<q<1$ and $\yd \in Y$ whenever $\gamma_0>0$ is large enough.

\item[LEP:]  To apply the Lepski{\u \i} principle (LEP) for choosing the regularization parameter $\gamma>0$ under (\ref{eq:vi}), we restrict our consideration to the symmetric error measures $E$ satisfying the triangle inequality up to some constant $1\le C_E< \infty$, i.e., for all $x^{(i)} \in \domain{\mathcal{R}},\;i=1,2,3,$
\begin{equation} \label{eq:E1Lep}
E(x^{(1)},x^{(2)})=E(x^{(2)},x^{(1)})
\end{equation}
and
\begin{equation} \label{eq:E2Lep}
E(x^{(1)},x^{(2)}) \le C_E\,\left(E(x^{(1)},x^{(3)})+E(x^{(3)},x^{(2)})\right).
\end{equation}
For such symmetric error measures $E$,  the prescribed $q$ and $\gamma_0$ satisfying $q\in (0, 1)$ and $\gamma_0>0$,
 the increasing geometric sequence
$$
\widetilde \Delta_q := \{ \gamma_j : \;\gamma_j=\gamma_0/q^j, \; j \in \mathbb{N} \},
$$
and the strictly decreasing function
$$\Theta(\gamma):=\frac{17 \delta^2}{2\lambda\gamma}$$ {
 with a fixed $\delta>0$  and $\lambda$ from the variational inequality (\ref{eq:vi}),   which characterizes an upper bound of $E(x_\gamma^\delta,\xdag)$ for all $\gamma_0 \le \gamma \le \gamma_{apri}$ and a priori parameter choice $\gamma_{apri}=\frac{\delta^2}{C\,g(\delta)}$},  the adapted Lepski{\u \i} principle (LEP) suggests to choose
the regularization parameter $\gamma_*=\gamma_{\scriptscriptstyle LEP}$ such that
$\gamma_*$ is the largest value $\gamma$ in $\widetilde \Delta_q$  satisfying
\begin{equation}\label{eq:Lepski}
    E(x^\delta_{\gamma^\prime},x^\delta_{\gamma}) \le 2\,C_E\,\Theta(\gamma^\prime) \qquad \mbox{for all} \quad \gamma^\prime \in \widetilde \Delta_q \quad \mbox{with}\quad \gamma_0 \le \gamma^\prime<\gamma\,.
\end{equation}
We like to mention that the
LEP is based on a priori parameter choice (cf.~\cite[\S~4.2.1]{HofMat12} and
\cite[\S~1.1.5]{LuPer13}).
Under the variational inequality (\ref{eq:vi}) with an error measure $E$,
a priori parameter choice { $\gamma_{apri}$} yields the convergence rate (\ref{eq:concaverate}) for $\gamma_*=\gamma_{apri}$ (cf.~\cite[\S~4.1]{HofMat12}).

For the adapted Lepski{\u \i} principle, the following error estimate holds.
\begin{lemma} \label{lem:Lep}
{
Assume the variational inequality (\ref{eq:vi}) holds for a nonnegative error measure $E(\cdot)$ satisfying (\ref{eq:E1Lep}) and \eqref{eq:E2Lep}.
}
If $\gamma_0$ is sufficiently small such that $E(x_{\gamma_0}^\delta,\xdag) \le \Theta(\gamma_0)$,
then $\gamma_{\scriptscriptstyle LEP}$ from LEP is uniquely determined for all $0<q<1$ and $\yd \in Y$ and
meets the error estimate
\begin{equation}\label{eq:estilem}
E(x_{\gamma_{\scriptscriptstyle LEP}}^\delta,\xdag) \le C\, C_E(2+C_E)\,\frac{17}{2q\lambda}\,g(\delta),
\end{equation}
{ where the positive constants $C$ and $\lambda$ are from  (\ref{eq:vi}), and $C_E$ from (\ref{eq:E2Lep}). }
\end{lemma}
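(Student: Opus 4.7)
The plan is a classical Lepski\u{\i}-type balancing argument adapted to the variational-inequality framework (\ref{eq:vi}). The heart of the proof is a deterministic a priori bound: for every $\gamma$ in the range $\gamma_0 \le \gamma \le \gamma_{apri} := \delta^2/(C g(\delta))$, one should show $E(x_\gamma^\delta, \xdag) \le \Theta(\gamma)$. Setting $d := \|Ax_\gamma^\delta - \yd\|_Y$, the minimizing property of $x_\gamma^\delta$ together with (\ref{eq:noise}) yield
$$d^2 + 2\gamma \mathcal{R}(x_\gamma^\delta) \le \delta^2 + 2\gamma \mathcal{R}(\xdag), \qquad \|A(x_\gamma^\delta - \xdag)\|_Y \le d + \delta,$$
and substituting into (\ref{eq:vi}) gives
$$\lambda E(x_\gamma^\delta, \xdag) \le \frac{\delta^2 - d^2}{2\gamma} + C g(d + \delta).$$
From here I would use subadditivity of the concave $g$ and the chord inequality $g(d) \le (d/\delta)g(\delta)$, then a Young-type estimate that absorbs the resulting $d\cdot Cg(\delta)/\delta$ against the negative term $-d^2/(2\gamma)$, leaving only $\delta,\gamma$ quantities. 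On the admissible range $\gamma \le \gamma_{apri}$, the inequality $Cg(\delta) \le \delta^2/\gamma$ then collapses the remainder to $\Theta(\gamma) = 17\delta^2/(2\lambda\gamma)$, the Young parameter being tuned to produce the advertised prefactor. The lower endpoint is covered by the hypothesis $E(x_{\gamma_0}^\delta, \xdag) \le \Theta(\gamma_0)$.

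With this a priori bound in hand, the Lepski\u{\i}\ criterion (\ref{eq:Lepski}) is automatically satisfied throughout $\widetilde\Delta_q \cap [\gamma_0, \gamma_{apri}]$: for any $\gamma' < \gamma$ in this intersection, (\ref{eq:E1Lep}), (\ref{eq:E2Lep}) and the monotonicity of $\Theta$ give
$$E(x_{\gamma'}^\delta, x_\gamma^\delta) \le C_E\bigl(E(x_{\gamma'}^\delta, \xdag) + E(x_\gamma^\delta, \xdag)\bigr) \le C_E\bigl(\Theta(\gamma') + \Theta(\gamma)\bigr) \le 2 C_E \Theta(\gamma').$$
Consequently $\gamma_{\scriptscriptstyle LEP}$ is well defined and $\gamma_{\scriptscriptstyle LEP} \ge \gamma_+ := \max\{\gamma \in \widetilde\Delta_q : \gamma \le \gamma_{apri}\}$; by construction of the geometric sequence one has $\gamma_+ > q \gamma_{apri}$. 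To close, insert $x_{\gamma_+}^\delta$ as intermediate point in (\ref{eq:E2Lep}):
$$E(x_{\gamma_{\scriptscriptstyle LEP}}^\delta, \xdag) \le C_E\bigl(E(x_{\gamma_{\scriptscriptstyle LEP}}^\delta, x_{\gamma_+}^\delta) + E(x_{\gamma_+}^\delta, \xdag)\bigr).$$
The second summand is at most $C_E \Theta(\gamma_+)$ by the a priori bound. For the first, either $\gamma_{\scriptscriptstyle LEP} = \gamma_+$ (and it vanishes) or $\gamma_+ < \gamma_{\scriptscriptstyle LEP}$ and (\ref{eq:Lepski})---applied with $\gamma' = \gamma_+$ and $\gamma = \gamma_{\scriptscriptstyle LEP}$---bounds it by $2 C_E \Theta(\gamma_+)$; combining produces the $C_E(2+C_E)\Theta(\gamma_+)$-type prefactor stated in the lemma. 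Finally, $\gamma_+ > q \gamma_{apri}$ translates into $\Theta(\gamma_+) < 17 C g(\delta)/(2 \lambda q)$, which delivers (\ref{eq:estilem}).

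The main obstacle is the first step: compressing the concave $g(d+\delta)$-term and the leftover $d$-dependence into the clean bound $\Theta(\gamma) = 17\delta^2/(2\lambda\gamma)$ requires a careful calibration of the Young inequality applied to $g$---just enough negative $d^2/\gamma$-budget from the minimization inequality to absorb the cross term, balanced against the final use of $Cg(\delta) \le \delta^2/\gamma$ on $\gamma \le \gamma_{apri}$. Once this deterministic a priori estimate is secured, the Lepski\u{\i}\ machinery is essentially bookkeeping, provided the constants in $\Theta$, in the test (\ref{eq:Lepski}) and in the target estimate (\ref{eq:estilem}) are matched consistently.
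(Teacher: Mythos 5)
Your proposal is correct and follows essentially the same route as the paper: the a priori bound $E(x_\gamma^\delta,\xdag)\le\Theta(\gamma)$ on $\gamma_0\le\gamma\le\gamma_{apri}$, the oracle point $\gamma_+$ at which the Lepski{\u\i} test is automatically passed so that $\gamma_{\scriptscriptstyle LEP}\ge\gamma_+$, the triangle-type estimate giving the factor $C_E(2+C_E)\Theta(\gamma_+)$, and the geometric-grid step $\gamma_+> q\gamma_{apri}$ producing the final constant. The only difference is that where the paper simply cites \cite[Lemma~3]{HofMat12} for the bound $E(x_\gamma^\delta,\xdag)\le \tfrac{17\delta^2}{2\lambda\gamma}$, you sketch its derivation from the minimizing property and the variational inequality (\ref{eq:vi}); that sketch does go through (the concavity/chord and Young estimates in fact give a constant smaller than $17/2$, which suffices), so no gap remains.
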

\begin{proof}
The proof goes along the same line as the one for Theorem~3 in \cite{HofMat12},
in combination with Proposition~1 from \cite{Mat06}. First we introduce
$$\gamma_+:=\max\{\gamma \in \widetilde \Delta:\,E(x_{\gamma^\prime}^\delta,\xdag) \le \Theta(\gamma^\prime) \quad \mbox{for all} \quad \gamma^\prime \in \widetilde \Delta_q,\;\gamma_0 \le \gamma^\prime \le \gamma\}.$$
Then we will show that $\gamma_k<\gamma_{apri} \le \gamma_k/q$ for some $k \in \N$ and
\begin{equation} \label{eq:help}
E(x_{\gamma_{\scriptscriptstyle LEP}}^\delta,\xdag) \le C_E(2+C_E) \Theta(\gamma_+).
\end{equation}
From \cite[Lemma~3]{HofMat12} we  derive that $E(x_\gamma^\delta,\xdag) \le \hat C \, \delta^2/\gamma$ for all $\gamma_0 \le \gamma \le \gamma_{apri}$ with $\hat C=\frac{17}{2 \lambda}$, which yields
$E(x_\gamma^\delta,\xdag) \le \Theta(\gamma)$ for all $\gamma_0 \le \gamma \le \gamma_{apri}$ and hence $\gamma_+ \ge \gamma_{apri}$. Therefore, we have for all $\gamma_0 \le \gamma \le \gamma_+$ the estimate
$$E(x_\gamma^\delta,x_{\gamma_+}^\delta) \le C_E\left(E(x_\gamma^\delta,\xdag)+E(\xdag,x_{\gamma_+}^\delta)\right)
\le C_E(\Theta(\gamma)+\Theta(\gamma_+)) \le 2C_E \Theta(\gamma). $$
This ensures the inequality $\gamma_{\scriptscriptstyle LEP} \ge \gamma_+$. Then we can find
\begin{align*}
E(x_{\gamma_{\scriptscriptstyle LEP}}^\delta,\xdag) &\le C_E\left(E(x_{\gamma_{\scriptscriptstyle LEP}}^\delta,x_{\gamma_+}^\delta)+E(x_{\gamma_+}^\delta,\xdag) \right)  \le 2 C_E^2 \Theta(\gamma_+)+C_E \Theta(\gamma_+) \\
&=C_E(2+C_E)\Theta(\gamma_+),
\end{align*}
which gives (\ref{eq:help}).  By (\ref{eq:help}) we obtain in analogy to the proof of Theorem~3 in \cite{HofMat12} that
\begin{align*}
E(x_{\gamma_{\scriptscriptstyle LEP}}^\delta,\xdag) &\le C_E(2+C_E)\Theta(\gamma_+) \le C_E(2+C_E)\,\Theta(\gamma_k)
=\frac{C_E(2+C_E)}{q} \,\Theta\left(\frac{\gamma_k}{q}\right) \\
&\le \frac{C_E(2+C_E)}{q} \,\Theta\left(\gamma_{apri}\right)   \le C \, C_E\,(2+C_E)\,\frac{17}{2\lambda q
}\,g(\delta),
\end{align*}
which completes the proof.
\end{proof}
\end{itemize}

The following convergence rate estimate follows directly
from \cite[Theorem~4.24]{Flemmingbuch12}, \cite[Theorems~2 and 3]{HofMat12} and Lemma~\ref{lem:Lep}
for the three aforementioned a posteriori choices of regularization parameters.
\begin{proposition} \label{pro:general}
If the variational inequality (\ref{eq:vi})
is valid for a nonnegative error measure $E: \domain{\mathcal{R}} \times \domain{\mathcal{R}} \to \R_+$ with constants $0<\lambda \le 1, \;C>0,$ and a concave index function $g$, then we have
the estimate (\ref{eq:concaverate}) of the convergence rate
for the regularized solutions $x^\delta_{\gamma_*}$ if the regularization parameter $\gamma_*$ is chosen as $\gamma_*=\gamma_{\scriptscriptstyle TDP}$ from the two-sided discrepancy principle,
as $\gamma_*=\gamma_{\scriptscriptstyle SDP}$ from the sequential discrepancy, or  as $\gamma_*=\gamma_{\scriptscriptstyle LEP}$ from the Lepski{\u \i} principle
provided that $E$ satisfies (\ref{eq:E1Lep}) and (\ref{eq:E2Lep}).
\end{proposition}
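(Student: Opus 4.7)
The plan is to handle the three parameter rules separately and, in each case, invoke a pre-existing convergence rate theorem after verifying that the variational inequality (\ref{eq:vi}) together with the chosen rule meets its hypotheses.

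For TDP I would start from the minimality inequality
$$\tfrac{1}{2}\|Ax^\delta_{\gamma_*}-\yd\|_Y^2+\gamma_*\mathcal{R}(x^\delta_{\gamma_*}) \le \tfrac{1}{2}\|A\xdag-\yd\|_Y^2+\gamma_*\mathcal{R}(\xdag).$$
The lower bound in (\ref{eq:tdp}) forces $\|Ax^\delta_{\gamma_{\scriptscriptstyle TDP}}-\yd\|_Y \ge \tau_1\delta \ge \|A\xdag-\yd\|_Y$, so the squared-residual terms cancel with a non-positive remainder and one obtains $\mathcal{R}(x^\delta_{\gamma_{\scriptscriptstyle TDP}}) \le \mathcal{R}(\xdag)$. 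Inserting this into (\ref{eq:vi}) reduces the variational inequality to $\lambda E(x^\delta_{\gamma_{\scriptscriptstyle TDP}},\xdag) \le C\,g(\|A(x^\delta_{\gamma_{\scriptscriptstyle TDP}}-\xdag)\|_Y)$, and the upper bound in (\ref{eq:tdp}) combined with the noise model yields $\|A(x^\delta_{\gamma_{\scriptscriptstyle TDP}}-\xdag)\|_Y \le (\tau_2+1)\delta$. Since any concave index function $g$ with $g(0)=0$ satisfies $g(c\delta) \le c\,g(\delta)$ for $c \ge 1$, the right-hand side is bounded by a constant multiple of $g(\delta)$, which delivers the claimed rate. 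This is precisely the content of Theorem~4.24 in \cite{Flemmingbuch12} specialised to our setting.

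For SDP the same scheme applies with (\ref{eq:sdp}) in place of (\ref{eq:tdp}), but the inequality $\mathcal{R}(x^\delta_{\gamma_{\scriptscriptstyle SDP}}) \le \mathcal{R}(\xdag)$ is no longer automatic. Instead, I would use the right inequality in (\ref{eq:sdp}), which states that $\gamma_{\scriptscriptstyle SDP}/q$ violates the discrepancy bound $\tau\delta$, to derive a lower bound on $\gamma_{\scriptscriptstyle SDP}$ of order $\delta^2/g(\delta)$; the penalty difference in the Tikhonov minimality can then be absorbed into a term of order $\delta^2/\gamma_{\scriptscriptstyle SDP}$, which in turn matches $g(\delta)$. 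This is the argument carried out in Theorem~2 of \cite{HofMat12}. The LEP case requires no further work, since Lemma~\ref{lem:Lep} already delivers estimate (\ref{eq:estilem}), which is exactly (\ref{eq:concaverate}) with $\gamma_* = \gamma_{\scriptscriptstyle LEP}$.

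I foresee no genuine obstacle, since each of the three rate estimates is a direct consequence of the cited results once (\ref{eq:vi}) is in hand. The only technical ingredient reused across all three cases is the inequality $g(c\delta)\le c\,g(\delta)$ for $c \ge 1$, which follows from the non-increasing monotonicity of $t \mapsto g(t)/t$ that any concave index function with $g(0)=0$ enjoys; this is what converts the constants $\tau_2+1$, $\tau+1$ and the factor $\frac{17}{2\lambda q}$ of Lemma~\ref{lem:Lep} into the single multiplicative constant hidden in the $\mathcal{O}(g(\delta))$ notation.
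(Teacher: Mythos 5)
Your proposal is correct and follows essentially the same route as the paper, which simply invokes \cite[Theorem~4.24]{Flemmingbuch12} for TDP, \cite[Theorem~2]{HofMat12} for SDP, and Lemma~\ref{lem:Lep} for LEP; your fleshed-out TDP argument (residual monotonicity giving $\mathcal{R}(x^\delta_{\gamma_{\scriptscriptstyle TDP}})\le\mathcal{R}(\xdag)$, then (\ref{eq:vi}) with $\|A(x^\delta_{\gamma_{\scriptscriptstyle TDP}}-\xdag)\|_Y\le(\tau_2+1)\delta$ and $g(c\delta)\le c\,g(\delta)$) and your SDP sketch are exactly the arguments behind those cited results.
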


In the subsequent two sections we will apply Proposition~\ref{pro:general} for
the penalty functionals $\mathcal{R}(x)=\|x\|_{\1}$ and $\mathcal{R}(x)= \eta\|x\|_{\1}+\frac{1}{2}\|x\|_{\2}^2$
(with an arbitrarily fixed $\eta>0$), respectively.

\section{Application to $\ell^1$-regularization}\label{s4}
\setcounter{equation}{0}
\setcounter{theorem}{0}

For $Z:=\2,\;\mathcal{R}(x):=\|x\|_{\1}$ and $\domain{\mathcal{R}}=X=\1$, Proposition~\ref{pro:general} applies with $\lambda=1$, $C=2$ and $g=\varphi$
defined by \begin{equation} \label{eq:varphi}
\varphi(t):=\inf_{n\in\mathbb{N}}\left(\sum_{k=n+1}^\infty\vert x^\dagger_k\vert
+t\sum_{k=1}^n\|f^{(k)}\|_{Y}\right),
\end{equation}
as a consequence of \cite[Theorem~5.2]{BFH13}, in which it was proven that the variational inequality (\ref{eq:vi}) holds  with $E(x,\xdag)=\|x-\xdag\|_{\1}$. The error measure $E$ is a metric in $\domain{\mathcal{R}}$, and hence (\ref{eq:E1Lep}) and
(\ref{eq:E2Lep})  are valid with $C_E=1$.
Thus, we have
\begin{equation}\label{eq:vil1}
\|x-\xdag\|_{\1} \le \|x\|_{\1}-\|\xdag\|_{\1}+ 2\,\varphi\left(\|A(x-\xdag)\|_Y\right)  \quad \mbox{for all} \quad x \in X=\1,
\end{equation}
where $\varphi$ is a concave index function.
Here, the rate function $g$ in (\ref{eq:concaverate})
depends on the decay rate of the remaining components $\xdag_k \to 0$ as $k \to \infty$ and the behaviour
of $\|f^{(k)}\|_{Y}$ (see Assumption~\ref{ass:basic} (c)), which is mostly a growth to infinity as $k \to \infty$. The studies in \cite{HofMat12} ensured that the same rate result is valid for all three  regularization parameter choices $\gamma_*=\gamma_{\scriptscriptstyle TDP},\,\gamma_*=\gamma_{\scriptscriptstyle SDP}$ and $\gamma_*=\gamma_{\scriptscriptstyle LEP}$.

Example~5.3 in \cite{BFH13} makes the convergence rate $g$ in $\ell^1$-regularization explicit as a H\"older rate:
\begin{equation} \label{eq:hoelder}
\|x_{\gamma_*}^\delta-\xdag\|_{\1} = \mathcal{O} \left(\delta^\frac{\mu}{\mu+\nu}\right) \qquad \mbox{as} \qquad\delta \to 0
\end{equation}
for the case when monomials
\begin{equation} \label{eq:power}
\sum \limits_{k=n+1}^\infty |\xdag_k|\le K_1\, n^{-\mu},  \qquad \sum \limits_{k=1}^n \|f^{(k)}\|_{Y} \le K_2\, n^\nu,
\end{equation}
with  exponents $\mu,\nu>0$ and some constants $K_1,K_2>0$, characterize the decay of the solution components and the growth of the $f^{(k)}$-norms, respectively.

On the other hand, Example~5.3 in \cite{BoHo13} outlines the situation that the $f^{(k)}$-norm growth is of
power type, but instead of (\ref{eq:power}) the decay of $\xdag_k \to 0$ is much faster, expressed by an exponential decay rate. Precisely, if
\begin{equation} \label{eq:expo}
\sum \limits_{k=n+1}^\infty |\xdag_k|\le K_1\, \exp(-n^\sigma) \quad \mbox{and}  \quad \sum \limits_{k=1}^n \|f^{(k)}\|_{Y} \le K_2\, n^\nu
\end{equation}
hold with exponents $\sigma,\nu>0$ and some constants $K_1,K_2>0$, then  we have the convergence rate
\begin{equation} \label{eq:log}
\|x_{\gamma_*}^\delta-\xdag\|_{\1} = \mathcal{O} \left(\delta \left(\log(\frac{1}{\delta})\right)^\frac{\nu}{\sigma}\right) \qquad \mbox{as} \qquad\delta \to 0 \,,
\end{equation} instead of (\ref{eq:hoelder}).

The rate (\ref{eq:log}) is not  far from the best rate
\begin{equation} \label{eq:best}
\|x_{\gamma_*}^\delta-\xdag\|_{\1} = \mathcal{O} \left(\delta \right) \qquad \mbox{as} \qquad\delta \to 0,
\end{equation}
which was already established for truly sparse solutions $\xdag \in \ell^0$  in \cite{GrasHaltSch08}. We note that, for all
$\xdag \in \ell^0$ with some $k_{max}$ as the largest index $k \in \N$ such that $\xdag_k \not=0$, there is a uniform constant $K=\sum \limits_{k=1}^{k_{max}} \|f^{(k)}\|_{Y}$ such that the function $\varphi(t)$ in (\ref{eq:varphi}) can be estimated from above by $Kt$. Since mostly $\|f^{(k)}\|_{Y}$ grows rapidly to infinity as $k \to \infty$, the constant $K$ may be large even if $k_{max}$ is not big.
As already mentioned in \cite{JLS09}, the super-rate (\ref{eq:best}) of the $\ell^1$-regularization may not be expected
when the corresponding constant $K$ explodes.

\section{Application to elastic-net regularization}\label{s5}
\setcounter{equation}{0}
\setcounter{theorem}{0}

In this section, we fix the parameter $\eta>0$ arbitrarily and consider the case with
$Z:=\2,\;\mathcal{R}(x):=\mathcal{R}_\eta(x)= \eta\|x\|_{\1}+\frac{1}{2}\|x\|_{\2}^2$ and $\domain{\mathcal{R}_\eta}=X=\1$.
Evidently, the penalty functional $\mathcal{R}_\eta$
is convex, lower semi-continuous and stabilizing in $Z$, which ensures the existence and stability of regularized
solutions $x_{\beta_*,\eta}^\delta$ and its convergence
$\lim \limits_{\delta \to 0} \|x_{\beta_*,\eta}^\delta-\xdag\|_{\2}=0$
for any $\eta>0$ if $\beta_*=\beta_{\scriptscriptstyle TDP}$  or $\beta_*=\beta_{\scriptscriptstyle SDP}$. We refer to \cite{AnzHofMat13,BFH13,JLS09} for further discussions. On the other hand, the application of Proposition~\ref{pro:general} to the elastic-net regularization (\ref{eq:Tiketa}), where
$\beta$ is chosen by TDP and SDP, or LEP,
requires us to construct an appropriate variational inequality.  Below (see Theorem \ref{thm:main}) we will perform this
construction by a weighted superposition of the variational inequality (\ref{eq:vil1}) used in section~\ref{s4}
and a corresponding one for the $\2$-term in the penalty $\mathcal{R}_\eta$, { which will be the main
purpose of Lemma \ref{lemma:l2}.}

 {
Let us recall the
distance function $d^{A^*}_{\xdag}(\cdot)$ defined in (\ref{eq:distfct}), and introduce a continuous and
strictly decreasing auxiliary function for $R>0$ and $\xdag \in \2 \setminus \range(A^*)$:
\begin{equation}\label{eq:phi}
\Phi(R):=[d^{A^*}_{\xdag}(R)]^2/R\,.
\end{equation}
Using the limit conditions
\begin{equation}\label{eq:limit}
 \lim \limits_{R \to 0} \Phi(R)=+\infty \quad \mbox{and} \quad  \lim \limits_{R \to \infty} \Phi(R)=0,
\end{equation}
it is not difficult to see that the function
\begin{equation}\label{eq:psi}
\widehat \psi(t):=[d^{A^*}_{\xdag}(\Phi^{-1}(t))]^2, \quad t>0,
\end{equation}
is an index function. On the other hand, for $\xdag\in \range(A^*)$ we can always find
$R_0>0$ such that $d^{*}_{\xdag}(R_0)=0$.
}

\begin{lemma}\label{lemma:l2}
  {
The variational inequality (\ref{eq:vil1}) holds true for $Z:=\2,\;\mathcal{R}(x):=\frac{1}{2}\|x\|_{\2}$ and $\domain{\mathcal{R}}=X=\2$, $E(x,\xdag):=\|x-\xdag\|_{\2}^2$ and some index functions $g:=\psi$, and positive constants $\lambda$ and $C$.
More precisely,  if $\xdag \in \2 \setminus \range(A^*)$,
we can take $\psi$ as a concave index function $\psi:[0,\infty)\to \R$  such that $\psi(t)\geq \widehat \psi(t) $ for all $t>0$, $\lambda:=\frac{1}{4}$ and $C:=2$; if $\xdag \in  \range(A^*)$, we can set $\psi(t):=t$, $\lambda:=\frac{1}{2}$ and $C:=R_0$ with $R_0>0$ satisfying $d^{A^*}_{\xdag}(R_0)=0$. }
\end{lemma}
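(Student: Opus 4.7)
The plan is to exploit the fact that the squared $\2$-norm has an elementary Bregman identity. Starting from
$$\frac{1}{2}\|x\|_{\2}^2 - \frac{1}{2}\|\xdag\|_{\2}^2 = \frac{1}{2}\|x-\xdag\|_{\2}^2 + \langle \xdag, x-\xdag\rangle_{\2},$$
the task reduces to controlling the linear term $\langle \xdag, x-\xdag\rangle_{\2}$ from below in terms of $\|A(x-\xdag)\|_Y$. Rearranging will place (a multiple of) $\|x-\xdag\|_{\2}^2$ on the left-hand side of the desired variational inequality.

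For any $R>0$ and any $v \in Y$ with $\|v\|_Y \le R$, I would write
$$\langle \xdag, x-\xdag\rangle_{\2} = \langle \xdag - A^*v,\, x-\xdag\rangle_{\2} + \langle v,\, A(x-\xdag)\rangle_Y,$$
apply the Cauchy--Schwarz inequality to each summand, and take the infimum over the admissible source elements $v$, obtaining
$$\langle \xdag, x-\xdag\rangle_{\2} \ge -\,d^{A^*}_{\xdag}(R)\,\|x-\xdag\|_{\2} - R\,\|A(x-\xdag)\|_Y.$$
Young's inequality then yields $d^{A^*}_{\xdag}(R)\,\|x-\xdag\|_{\2} \le \tfrac{1}{4}\|x-\xdag\|_{\2}^2 + [d^{A^*}_{\xdag}(R)]^2$, so a quarter of the quadratic term can be absorbed on the left, leading to
$$\frac{1}{4}\|x-\xdag\|_{\2}^2 \le \frac{1}{2}\|x\|_{\2}^2 - \frac{1}{2}\|\xdag\|_{\2}^2 + [d^{A^*}_{\xdag}(R)]^2 + R\,\|A(x-\xdag)\|_Y.$$

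Writing $t := \|A(x-\xdag)\|_Y$, the last step is to minimise the right-hand side in $R>0$. The natural balance is $[d^{A^*}_{\xdag}(R)]^2 = R\,t$, i.e.\ $\Phi(R)=t$, which uniquely determines $R=\Phi^{-1}(t)$ and gives $[d^{A^*}_{\xdag}(R)]^2 + R\,t = 2\,\widehat\psi(t)$. This produces the target inequality with $\lambda=\tfrac{1}{4}$, $C=2$ and index function $\widehat\psi$; any concave index function $\psi \ge \widehat\psi$ works as well, since the right-hand side only grows.

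The case $\xdag \in \range(A^*)$ is simpler: pick once and for all a source element $v_0 \in Y$ with $\xdag=A^*v_0$ and $\|v_0\|_Y \le R_0$, so the distance-function term vanishes and no Young splitting is needed; the direct estimate
$$\frac{1}{2}\|x-\xdag\|_{\2}^2 \le \frac{1}{2}\|x\|_{\2}^2 - \frac{1}{2}\|\xdag\|_{\2}^2 + R_0\,\|A(x-\xdag)\|_Y$$
delivers $\lambda=\tfrac{1}{2}$, $C=R_0$, $\psi(t)=t$. The main obstacle I anticipate is the well-definedness of the choice $R=\Phi^{-1}(t)$: one has to verify that $\Phi:(0,\infty)\to(0,\infty)$ is a homeomorphism, so that $\widehat\psi$ is a genuine index function. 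This rests on the limit relations (\ref{eq:limit}) together with the continuity and strict monotonicity of the concave nonincreasing distance function $d^{A^*}_{\xdag}$, and on the non-attainment fact that $d^{A^*}_{\xdag}(R)>0$ for every finite $R$ precisely because $\xdag \notin \range(A^*)$. A minor technicality is that the infimum defining $d^{A^*}_{\xdag}(R)$ need not be attained; in that case the Cauchy--Schwarz bound above is applied along a minimising sequence $\{v_n\} \subset Y$ and one passes to the limit.
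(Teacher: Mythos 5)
Your argument is correct and follows essentially the same route as the paper: the same Bregman identity for $\tfrac12\|\cdot\|_{\2}^2$, the same splitting of $\langle \xdag,x-\xdag\rangle_{\2}$ via an approximate source element, Young's inequality with the same constants, the balancing choice $R=\Phi^{-1}(\|A(x-\xdag)\|_Y)$ yielding $2\widehat\psi$, and the direct estimate with $R_0$ in the attainable case. The only cosmetic difference is that you bypass the attainment of the infimum in (\ref{eq:distfct}) by taking the infimum over admissible $v$ (or a minimizing sequence), whereas the paper invokes the known minimizing decomposition $\xdag=A^*v_R+u_R$; both are fine.
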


\begin{proof}

It is readily checked that
\begin{equation}\label{eq:ell2}
\frac{1}{2}\|x-\xdag\|^2_{\2}= \frac{1}{2}\|x\|^2_{\2}- \frac{1}{2}\|\xdag\|^2_{\2} - \langle \xdag, x-\xdag
\rangle_{\2}.
\end{equation}
{Now we separate two cases.
In the first case, characterized by $\xdag \notin \range(A^*)$, we decompose for arbitrary fixed $R>0$ the element $\xdag$ as $\xdag=A^*\,v_R+u_R$, where
$\|v_R\|_{Y}=R$ and $\|u_R\|_{\2} = d^{A^*}_{\xdag}(R)>0$. It is known (cf.~\cite[p.377-78]{BoHo10}) that
the infimum in (\ref{eq:distfct}) is a minimum in this case
and for all $R>0$, and that such elements
$v_R \in Y$ and $u_R \in \2$ always exist.  Using this fact, we can conclude that
$$- \langle \xdag, x-\xdag
\rangle_{\2} = \langle A^*\,v_R,x-\xdag\rangle_{\2}+\langle u_R,x-\xdag\rangle_{\2}=\langle v_R,A(x-\xdag)\rangle_{Y}+\langle u_R,x-\xdag\rangle_{\2},$$
which yields the estimate
$$ - \langle \xdag, x-\xdag\rangle_{\2} \le R \|A(x-\xdag)\|_Y+d^{A^*}_{\xdag}(R)\|x-\xdag\|_{\2}$$
for the third term in the right-hand side of the identity (\ref{eq:ell2}). Hence, we may employ for \linebreak $\xdag \in \2 \setminus \range(A^*)$
the auxiliary function
$\Phi$, defined by  (\ref{eq:phi}). Thanks to the limit conditions (\ref{eq:limit}),
we can choose $R:=\Phi^{-1}(\|A(x-\xdag)\|_Y)$. Then we obtain upon
Young's inequality that
\begin{align*}
- \langle \xdag, x-\xdag
\rangle_{\2}&\le R \|A(x-\xdag)\|_Y+ [d^{A^*}_{\xdag}(R)]^2+\frac{\|x-\xdag\|_{\2}^2}{4}\\\
&=2\,[d^{A^*}_{\xdag}(\Phi^{-1}(\|A(x-\xdag)\|_Y))]^2+\frac{\|x-\xdag\|_{\2}^2}{4}.
\end{align*}
Consequently, recalling (\ref{eq:ell2}) we know that for $\xdag \in \2 \setminus \range(A^*)$ the variational inequality
\begin{equation}\label{eq:vil2}
0 \le\frac{1}{4} \|x-\xdag\|^2_{\2} \le \frac{1}{2} \|x\|^2_{\2}-\frac{1}{2} \|\xdag\|^2_{\2}+
 2\,[d^{A^*}_{\xdag}(\Phi^{-1}(\|A(x-\xdag)\|_Y))]^2
\end{equation}
is valid for all $x \in \2$.} { Recalling the definition of $\widehat \psi$ and
$\psi$, we know that the last term in the right hand-side  of the inequality (\ref{eq:vil2}) is  exactly $2 \widehat \psi(\|A(x-\xdag)\|_Y))$,
and this inequality holds still true if this term is replaced by $2\psi(\|A(x-\xdag)\|_Y)$.
 }

{In the second case, where a source condition $\xdag \in \range(A^*)$ is satisfied, we know
that there exists some  $R_0>0$ such that $d^{A^*}_{\xdag}(R_0)=0$, and we can simply estimate the third term in the
right-hand side of (\ref{eq:ell2})  as
$$- \langle \xdag, x-\xdag\rangle_{\2} \le R_0  \|A(x-\xdag)\|_Y.$$
Then the variational inequality attains the simpler form
\begin{equation}\label{eq:vil20}
0 \le\frac{1}{2} \|x-\xdag\|^2_{\2} \le \frac{1}{2} \|x\|^2_{\2}-\frac{1}{2} \|\xdag\|^2_{\2}+
R_0\,\|A(x-\xdag)\|_Y\,,
\end{equation}}
{ thus we can set
$\psi(t):=t$.
}
\end{proof}

\begin{theorem} \label{thm:main}
{
Let
$E_\eta(x,\xdag):=\eta \,\|x-\xdag\|_{\1} +\frac{1}{4}\|x-\xdag\|^2_{\2}$
be an error functional,
and $g_\eta$ be a concave index function given by
 \begin{equation}\label{eq:geta}
g_\eta(t):=2\eta\,\varphi(t)+K\psi(t), \quad t>0,
\end{equation}
where $\varphi$ is from (\ref{eq:varphi}) and $\psi$ is defined in Lemma \ref{lemma:l2},
and the constants $K=2$ for $\xdag \in \1 \setminus \range(A^*)$ and
$K=R_0$ for $\xdag \in \1 \cap \range(A^*)$ satisfying $d^{*}_{\xdag}(R_0)=0$.
Then the following variational inequality
\begin{equation} \label{eq:viref}
E_\eta(x,\xdag) \le \mathcal{R}_\eta(x) - \mathcal{R}_\eta(\xdag) + g_\eta(\|A(x-\xdag)\|_Y)\qquad \forall\,x \in \1\,
\end{equation}
holds for the elastic-net regularization (\ref{eq:Tiketa})-(\ref{eq:peneta}) and an arbitrary weight parameter $\eta>0$.
}
%
{
Moreover, as an immediate consequence of the inequality (\ref{eq:viref}), the convergence rate
\begin{equation} \label{eq:thmrate}
E_\eta(x^\delta_{\beta_*,\eta},\xdag)=\mathcal{O}(g_\eta(\delta)) \quad \mbox{as} \quad \delta \to 0
\end{equation}
holds for the elastic-net regularized solutions $x^\delta_{\beta_*,\eta}$ when
the regularization parameter $\beta_*$ is chosen from {TDP, SDP or LEP}.  Thus the following
alternative rate estimate follows
\begin{equation} \label{eq:rates12}
\|x^\delta_{\beta_*,\eta}-\xdag\|_{\2} \le \|x^\delta_{\beta_*,\eta}-\xdag\|_{\1}=\mathcal{O}(\varphi(\delta)+\psi(\delta)) \quad \mbox{as} \quad \delta \to 0.
\end{equation}
}\end{theorem}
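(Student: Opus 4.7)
The plan is to obtain the variational inequality (\ref{eq:viref}) as a weighted superposition of two already–established variational inequalities: the $\ell^1$ inequality (\ref{eq:vil1}) used in Section~\ref{s4} for the purely sparse penalty, and the $\ell^2$ inequality produced by Lemma~\ref{lemma:l2} for the quadratic penalty $\tfrac12\|\cdot\|_{\ell^2}^2$. Since the penalty $\mathcal{R}_\eta$ and the error measure $E_\eta$ split cleanly into an $\ell^1$-part with weight $\eta$ and an $\ell^2$-part with weight $\tfrac14$ (respectively $\tfrac12$ on the penalty side), it is natural to expect that the two inequalities can simply be added, with the $\ell^1$ piece multiplied by $\eta$.

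Concretely, first I would write down (\ref{eq:vil1}) giving $\|x-\xdag\|_{\ell^1} \le \|x\|_{\ell^1}-\|\xdag\|_{\ell^1}+2\varphi(\|A(x-\xdag)\|_Y)$ and, in parallel, invoke Lemma~\ref{lemma:l2} to get $\tfrac14\|x-\xdag\|_{\ell^2}^2 \le \tfrac12\|x\|_{\ell^2}^2-\tfrac12\|\xdag\|_{\ell^2}^2+K\psi(\|A(x-\xdag)\|_Y)$, with $(K,\psi)$ chosen according to the two cases $\xdag\in \ell^1\setminus\range(A^*)$ versus $\xdag\in \ell^1\cap\range(A^*)$ as spelled out in the lemma. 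Multiplying the first by $\eta$ and adding the two inequalities yields the left-hand side $E_\eta(x,\xdag)$, the right-hand side $\mathcal{R}_\eta(x)-\mathcal{R}_\eta(\xdag)$, and a residual $2\eta\varphi(\|A(x-\xdag)\|_Y)+K\psi(\|A(x-\xdag)\|_Y)=g_\eta(\|A(x-\xdag)\|_Y)$. One should then check that $g_\eta$ is indeed a concave index function; this is immediate because $\varphi$ and $\psi$ are concave index functions and $g_\eta$ is a nonnegative linear combination of them, hence continuous, strictly increasing, concave and vanishing at $t=0$.

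To derive the rate (\ref{eq:thmrate}), I would invoke Proposition~\ref{pro:general} applied to the general penalty $\mathcal{R}=\mathcal{R}_\eta$ on $Z=\ell^2$ with $\domain{\mathcal{R}_\eta}=\ell^1$. For TDP and SDP the already-verified variational inequality together with nonnegativity of $E_\eta$ and concavity of $g_\eta$ are enough. For LEP I must additionally check (\ref{eq:E1Lep})–(\ref{eq:E2Lep}): symmetry of $E_\eta$ is obvious from the symmetry of both norms, and the quasi-triangle inequality follows from the standard metric inequality for $\|\cdot\|_{\ell^1}$ combined with $\|x-y\|_{\ell^2}^2 \le 2\|x-z\|_{\ell^2}^2+2\|z-y\|_{\ell^2}^2$, giving $E_\eta(x,y)\le 2(E_\eta(x,z)+E_\eta(z,y))$, so $C_E=2$ works.

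Finally, to deduce (\ref{eq:rates12}) I would simply isolate the two summands of $E_\eta$: dividing by $\eta$ gives the $\ell^1$-rate $\|x^\delta_{\beta_*,\eta}-\xdag\|_{\ell^1}=\mathcal{O}(\varphi(\delta)+\psi(\delta))$ (absorbing $\eta$ and $K$ into the $\mathcal{O}$-constant), and the embedding $\|\cdot\|_{\ell^2}\le\|\cdot\|_{\ell^1}$ (valid on $\ell^1$) produces the $\ell^2$-rate. I do not expect a genuine obstacle: the argument is essentially additive, and the only subtlety worth flagging is the case split in Lemma~\ref{lemma:l2}, which forces the constant $K$ and the index function $\psi$ to be interpreted appropriately; but once that lemma is in hand the proof is routine assembly.
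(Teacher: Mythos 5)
Your proposal is correct and follows essentially the same route as the paper: multiply the $\ell^1$-inequality (\ref{eq:vil1}) by $\eta$, add the $\ell^2$-inequality from Lemma~\ref{lemma:l2} (with the same case split for $K$ and $\psi$), and then apply Proposition~\ref{pro:general} with $E=E_\eta$ and $g=g_\eta$, checking (\ref{eq:E1Lep})--(\ref{eq:E2Lep}) with $C_E=2$ for LEP and deducing (\ref{eq:rates12}) via $\|\cdot\|_{\2}\le\|\cdot\|_{\1}$. Your explicit verification of the quasi-triangle inequality for $E_\eta$ is a detail the paper merely asserts, but the argument is otherwise identical.
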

\begin{proof}
{Taking into account (\ref{eq:vil2}) and (\ref{eq:vil20}), we can deduce from (\ref{eq:vil1})
the variational inequality
\begin{equation}\label{eq:vieta}
E_\eta(x,\xdag)\le \mathcal{R}_\eta(x) - \mathcal{R}_\eta(\xdag) +  \left[2\,\eta\,\varphi+ K\,\psi\right]
(\|A(x-\xdag)\|_Y)
\end{equation}
 of type (\ref{eq:vi}) for all $x \in \1$,  with the constants $K=2$ for $\xdag \in \1 \setminus \range(A^*)$ and
$K=R_0$ for $\xdag \in \1 \cap \range(A^*)$ satisfying $d^{*}_{\xdag}(R_0)=0$. We also note that the error functional $E_\eta$
satisfies the conditions (\ref{eq:E1Lep}) and (\ref{eq:E2Lep}) with $C_E=2$, and that the function $g_\eta(t)$ is a concave index function.}

Then we can apply Proposition~\ref{pro:general}
with $\lambda=1$, $E=E_\eta$, $g=g_\eta$ and $C=1$
for all three a posteriori parameter choices of $\beta$ under consideration to obtain the desired results.
\end{proof}

\begin{example}\label{ex:Hoelder}
{\rm We discuss now the convergence rate in (\ref{eq:rates12}) under the assumptions that $\xdag$ satisfies the H\"older source condition with
exponent $\theta>0$,
implying a power-type decay of the distance function (\ref{eq:powerdist}), and that
 the power-type decay of solution components and power-type growth of the $f^{(k)}$-norms (\ref{eq:power}) hold.
 Note that $\xdag \in \ell^0$
implies $\xdag \in \range(A^*)$, which is a direct consequence of Assumption~\ref{ass:basic} (c).
Then (\ref{eq:hoelder}) and (\ref{eq:best})  show that  $\varphi(\delta) \sim \delta^\frac{\mu}{\mu+\nu}$ if $\xdag \in
\1 \setminus \ell^0$ and  $\varphi(\delta) \sim \delta$ if $\xdag \in \ell^0$.
On the other hand, the concave index function $\widehat \psi(\delta)=\psi(\delta) \sim \delta^\frac{2\theta}{\theta+1}$ can be seen from formula (\ref{eq:psi}) if $\xdag \in \1 \setminus \range(A^*)$, while it occurs that $\psi(\delta)\sim \delta$
if $\xdag \in \1 \cap \range(A^*)$.
{In summary, we have by Theorem~\ref{thm:main} the H\"older convergence rate
\begin{equation} \label{eq:karate}
\|x^\delta_{\beta_*,\eta}-\xdag\|_{\1}=\mathcal{O}(\delta^\kappa),
\end{equation}
where $\kappa$ is given by
\begin{equation} \label{eq:kap}
\kappa= \left\{ \begin{array}{ccl} 1 & \mbox{if} & \xdag \in \ell^0,\\
\frac{\mu}{\mu+\nu} &\mbox{if} & \xdag \in (\1 \cap \range(A^*))\setminus \ell^0,
\\
\min\left( \frac{\mu}{\mu+\nu},\frac{2\theta}{\theta+1}\right) &\mbox{if} & \xdag \in \ell^1 \setminus \range(A^*).  \end{array} \right.
\end{equation}
In the case $\xdag \in \1 \setminus \ell^0$, we observe
the behaviour of the exponent (\ref{eq:kap}) for the H\"older convergence rate (\ref{eq:karate}) that
fast convergence rates occur only for almost sparse solutions. There is a trade-off here in the sense that the closer
the exponent $\kappa$ is to one the more drastic must be the decay of the components $\xdag_k$
of the solution $\xdag$ if $k$ tends to infinity.
More precisely, for $\kappa$ close to one, the decay exponent $\mu$ from (\ref{eq:power}) for the solution components has to be sufficiently large and the exponent $\theta$ of the range-type source condition occurring
in (\ref{eq:powerdist}) has to be sufficiently close to one.}
}\end{example}

\begin{remark} \label{rem:errorestimate}
{\rm Taking into account the estimates from \cite{HofMat12}, we can distinguish upper bounds of $E_\eta(x^\delta_{\beta_*,\eta},\xdag)$  in Theorem~\ref{thm:main} for $\beta_*=\beta_{\scriptscriptstyle TDP}$ and
$\beta_*=\beta_{\scriptscriptstyle SDP}$, which, however, yield the same convergence rate (\ref{eq:thmrate}).  To be more precise, we find for sufficiently small $\delta>0$ that
\begin{equation} \label{eq:DP}
\eta \|x^\delta_{\beta_*,\eta}-\xdag\|_{\1}+\frac{1}{4}\|x^\delta_{\beta_*,\eta}-\xdag\|^2_{\2} \le C_*\, (2\eta\,\varphi(\delta)+K\psi(\delta))
\end{equation}
holds with
\begin{equation} \label{eq:C*}
C_*= \left\{\begin{array}{ccc} \tau_2+1 & \mbox{for} & \beta_*=\beta_{\scriptscriptstyle TDP}, \\ (\tau+1)\max\{\frac{2(\tau^2+1)}{q(\tau-1)^2(\tau+1)},1\} & \mbox{for} & \beta_*=\beta_{\scriptscriptstyle SDP.}\end{array}  \right.
\end{equation}
From (\ref{eq:DP}) we derive the $\1$-norm estimate
\begin{equation} \label{eq:DP1}
\|x^\delta_{\beta_*,\eta}-\xdag\|_{\1} \le 2 C_*\,\varphi(\delta)+ \frac{C_*}{\eta} K\psi(\delta).
\end{equation}
In contrast to the approach for (\ref{eq:rates12}), we may derive directly from (\ref{eq:DP}) an $\2$-norm estimate of the form
\begin{equation} \label{eq:DP2}
\|x^\delta_{\beta_*,\eta}-\xdag\|_{\2} \le 2\, \sqrt{ C_*\, (2\eta\,\varphi(\delta)+K\psi(\delta))}
\end{equation}
with a lower (square root) rate as $\delta \to 0$. On the other hand,
{it follows from the formula (\ref{eq:estilem})
that the estimate (\ref{eq:DP}) holds true for $\beta_*=\beta_{\scriptscriptstyle LEP}$ with constant
$ 
C_*= \; {34}/{q}.\quad 
$ 
}}\end{remark}

\begin{remark} \label{rem:role_of_weight}{\rm
By introducing the weight parameter $\eta>0$ in section~\ref{s1}, the natural two-parameter regularization (\ref{eq:Tikelastic}) of the elastic-net approach reduces to the one-parameter regularization (\ref{eq:Tiketa}).
If the weight $\eta$ is fixed for all $\delta>0$, then the convergence rate in (\ref{eq:rates12}), also
the H\"older rate expressed by the exponents $\kappa$ in Example~\ref{ex:Hoelder}, is the same for all $0<\eta<\infty$,
but the upper bounds on the right-hand side of (\ref{eq:DP1}) and (\ref{eq:DP2}) depend on $\eta$. A very illustrative situation occurs when we consider TDP with $\tau:=\tau_1=\tau_2$. At least for sufficiently small $\delta>0$, the regularization parameter $\beta_*(\eta)$ is  well-defined for all $\eta>0$ and the pairs $(\beta_*(\eta),\eta)$ form a `discrepancy curve' with $\|Ax^\delta_{\beta_*(\eta),\eta}-\yd\|_Y=\tau \delta$ and the uniform
 convergence rates of all associated regularized solutions. Then we can select one pair from the curve with the goal to implement
additional solution features; see more discussions in \cite[page 166]{LuPer13}.
}\end{remark}

\section{Conclusions}
In this work we have derived some variational inequalities for both  $\1$- and elastic-net regularizations.
Then we have applied these variational inequalities
to obtain some explicit convergence rates, and compared the results with the ones from the classical source conditions.
This
increases significantly the range of the regularized solutions, for which the convergence rates can be achieved.
Three different principles of a posteriori parameter choices are also discussed, and their influences on convergence rates
are analyzed. The basic principles, analysis tools, and the selection strategies for the choice of
regularization parameters can
be equally applied to general multi-parameter Tikhonov-type regularizations.

{
\section{Acknowledgements}

The authors would like to thank the anonymous referees for their many insightful and constructive
suggestions and comments, which have helped us {to} improve the presentation and the results of
the paper significantly.
}

\end{document}